\newcommand{\showcomments}{yes}
\renewcommand{\showcomments}{no}
\newsavebox{\commentbox}
\newenvironment{com}%
{\ifthenelse{\equal{\showcomments}{yes}}%
{\footnotemark
        \begin{lrbox}{\commentbox}
        \begin{minipage}[t]{1.25in}\raggedright\sffamily\tiny
        \footnotemark[\arabic{footnote}]}
{\begin{lrbox}{\commentbox}}}%
{\ifthenelse{\equal{\showcomments}{yes}}%
{\end{minipage}\end{lrbox}\marginpar{\usebox{\commentbox}}}
{\end{lrbox}}}
\DeclareMathOperator{\Aut}{Aut}
\DeclareMathOperator{\cat}{CAT}
\DeclareMathOperator{\bdry}{\partial_{\infty}}
\theoremstyle{definition}
\newtheorem{thm}{Theorem}[section]
\newtheorem{lem}[thm]{Lemma}
\newtheorem{prop}[thm]{Proposition}
\newtheorem{exa}[thm]{Example}
\newtheorem{cor}[thm]{Corollary}
\newtheorem*{assumption}{Standing Assumption}
\author{Kasia Jankiewicz}
\author{Annette Karrer}
\author{Kim Ruane}
\author{Bakul Sathaye}
\address{Department of Mathematics, University of California, Santa Cruz, CA 95064, USA}
\email{kasia@ucsc.edu}
\address{Department of Mathematics and Statistics - McGill University, 805 Sherbrooke Street West
Montreal, Canada}
\email{annette.karrer@mail.mcgill.ca}
\address{Department of Mathematics, Tufts University, Medford, MA  02155, USA}
\email{kim.ruane@tufts.edu}
\address{Mathematisches Institut, WWU M\"unster, 48149 M\"unster, Germany}
\email{bsathaye@uni-muenster.de}
\title{The boundary rigidity of lattices in products of trees}
\begin{document}
\begin{com}
{\bf \normalsize COMMENTS\\}
ARE\\
SHOWING!\\
\end{com}

\begin{abstract} We show that every group acting properly and cocompactly by isometries on a product of $n$ bounded valance, bushy 
 trees is boundary rigid. That means that every CAT(0) space that admits a geometric action of any such group has the visual boundary homeomorphic to a join of $n$ copies of the Cantor set.
\end{abstract}
\maketitle
\section{Introduction}
A visual boundary is a particular type of compactification of a proper CAT(0) metric space. The boundary is defined as a set of equivalence classes of asymptotic rays endowed with an appropriate topology. For hyperbolic spaces $X$ and $Y$, any quasi-isometry $X\to Y$ between them extends to a homeomorphism of their visual boundaries. Consequently, the homeomorphism type of the boundary of a hyperbolic group  is a well-defined group invariant. This is not true for $\cat(0)$ groups, i.e.\ groups that act geometrically on $\cat(0)$ spaces. 

Bowers-Ruane give an example of a group $G$ acting geometrically on $\cat(0)$ spaces $X$ and $Y$, such that the associated $G$-equivariant quasi-isometry between the spaces does not extend to a homoemorphism between their visual boundaries \cite{BowersRuane96}. Croke-Kleiner provided an example of a $\cat(0)$ group $G$ and two $\cat(0)$ spaces $X,Y$, both admitting geometric actions by $G$ such that $\bdry X$ and $\bdry Y$ are non-homeomorphic \cite{CrokeKleiner2000}. Wilson further showed that in fact this same $G$ acts geometrically on uncountably many spaces with boundaries of distinct topological type \cite{Wilson2005}. The group $G$ in the Croke-Kleiner construction is the right-angled Artin group with the defining graph a path on four vertices. 

A $\cat(0)$ group $G$ is called \emph{boundary rigid} if the visual boundaries of all $\cat(0)$ spaces admitting a geometric action by $G$ are homeomorphic. As noted above, hyperbolic $\cat(0)$ groups are boundary rigid while not all $\cat(0)$ groups are boundary rigid.  Ruane proved that the direct product of hyperbolic groups is boundary rigid \cite{Ruane99}. Hosaka extended that to show that any direct product of boundary rigid groups is boundary rigid \cite{Hosaka2003}. Hruska-Kleiner proved that groups acting geometrically on $\cat(0)$ spaces with the isolated flats property are boundary rigid \cite{HruskaKleiner05}.

A \emph{bushy tree} is a simplicial tree that is not quasi-isometric to a point or a line. 
In this note, we study a family of $\cat(0)$ groups acting geometrically on the product of $n$ bushy trees. 
We will assume the groups preserve the factors, which is always the case after passing to a finite-index subgroup.
We refer to such groups as \emph{lattices in a product of trees}. We skip the word \emph{cocompact} even though we are assuming this property throughout the paper.

The simplest example of a lattice in a product of trees is a direct product $F_n\times F_m$ of two finite rank free groups. These are boundary rigid by \cite{Ruane99}.  However, there exist lattices in product of trees that are \emph{irreducible}, i.e.\ they do not split as direct products, even after passing to a finite index subgroups.   Irreducible lattices in products of trees were first studies by Mozes \cite{Mozes92}, Burger-Mozes \cite{BurgerMozes97}, \cite{BurgerMozes2000} and by \cite{Wise96Thesis}. Burger-Mozes constructed examples of simple lattices in a product of trees, providing the first examples of simple $\cat(0)$ groups, as well as the first examples of simple amalgamated products of free groups.

In this paper, we prove the following. 

\begin{thm}\label{thm:main}
Let $G$ be a lattice in a product of $n$ trees. Suppose $G$ acts geometrically on a $\cat(0)$ space $X$. Then $\bdry X$ is the join of $n$ Cantor sets. 
\end{thm}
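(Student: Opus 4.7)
The plan is to reduce the theorem to the standard fact that the visual boundary of a $\cat(0)$ product is the spherical join of the factors' boundaries. Concretely, I will show that $X$ splits isometrically as a $\cat(0)$ product $X = X_1 \times \cdots \times X_n$ in which each factor $X_i$ is quasi-isometric to the corresponding tree $T_i$ (up to a permutation of indices). Since each $T_i$ is a bounded-valence bushy tree, each $X_i$ is then $\delta$-hyperbolic and $\bdry X_i$ is a Cantor set, so $\bdry X = \bdry X_1 \ast \cdots \ast \bdry X_n$ is a join of $n$ Cantor sets.

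To produce the splitting, I would invoke the canonical product decomposition for proper $\cat(0)$ spaces admitting a cocompact isometry group, due to Caprace--Monod. Applied to both geometric actions of $G$, that theorem produces decompositions $X \cong E \times Y_1 \times \cdots \times Y_k$ and $T_1 \times \cdots \times T_n \cong E' \times Y'_1 \times \cdots \times Y'_{k'}$ into a Euclidean de Rham factor together with irreducible non-Euclidean factors, and it makes the data consisting of $\dim E$, the number $k$, and the quasi-isometry class of each $Y_i$ a quasi-isometry invariant---hence a $G$-invariant. The product-of-trees side already realises this canonical form, with each $T_i$ irreducible and non-Euclidean, so reading the invariants off gives $\dim E = 0$, $k = n$, and each $Y_i$ quasi-isometric to some $T_{\sigma(i)}$. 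Transporting this to $X$ yields the desired splitting.

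The remaining steps are essentially formal. Being quasi-isometric to a bounded-valence bushy tree forces each $X_i$ to be Gromov hyperbolic with Cantor set Gromov boundary, and for a proper $\cat(0)$ space this Gromov boundary coincides with the visual boundary. Taking the spherical join over the $n$ factors then gives $\bdry X$ as a join of $n$ Cantor sets.

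I expect the main obstacle to be justifying the splitting in the genuinely irreducible case (e.g.\ Burger--Mozes lattices), where no finite-index subgroup of $G$ splits as a direct product. For reducible $G$ one can pass to a finite-index direct factorisation and combine Ruane's theorem with Hosaka's product theorem, but for irreducible $G$ one really needs the splitting of $X$ to be an intrinsic property of the $\cat(0)$ \emph{space} rather than something inherited from a splitting of the group. The heart of the proof is therefore the canonicity and quasi-isometric invariance of the Caprace--Monod decomposition; making this rigorous may require a careful analysis of parallel classes of geodesics in $X$ and a comparison of the rank-one directions they produce with those already visible in $T_1 \times \cdots \times T_n$, in order to rule out merging of tree factors into higher-rank irreducible pieces.
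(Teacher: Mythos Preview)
Your two-step plan---split $X$ as an $n$-fold product and then identify each factor's boundary---matches the paper's overall structure, but the key step has a genuine gap. You attribute to Caprace--Monod the statement that the de Rham decomposition of a proper cocompact $\cat(0)$ space is a quasi-isometry invariant, with the number of irreducible factors and their quasi-isometry types determined by $G$. Caprace--Monod (following Foertsch--Lytchak) prove existence and isometric uniqueness of such a decomposition for a \emph{fixed} space, and that the full isometry group preserves it; they do not prove that two different $\cat(0)$ spaces carrying geometric $G$-actions have matching factors. In the generality you need, that matching is essentially the content of Margolis's later theorem, which the paper only cites as an independent and stronger result. Your own final paragraph identifies exactly this issue---``ruling out merging of tree factors into higher-rank irreducible pieces''---and leaves it open; that is the whole difficulty, and canonicity of the decomposition alone does not resolve it.

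The paper obtains the splitting by a different mechanism. After passing to a minimal quasi-dense convex core, it uses Caprace--Monod's rank-rigidity theorem to \emph{extend} the $G$-action on $X$ to a continuous isometric action of the ambient product $\Aut(T_1)\times\cdots\times\Aut(T_n)$, and then applies Monod's geometric splitting theorem to that product action to split $X$ equivariantly as $X_1\times\cdots\times X_n$. This requires checking separately that $G$ has no virtual $\mathbb Z$ direct factor and no fixed point at infinity, which the paper does via the tree geometry.

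The second step is also organized oppositely to yours. The paper does \emph{not} first show $X_i$ is quasi-isometric to $T_i$ and read off the boundary. The projected $G$-action on $X_i$ is only cocompact, not proper, so there is no geometric action to transport. Instead the paper shows $\dim\bdry X_i=0$ via Geoghegan--Ontaneda dimension invariance, identifies $\bdry X_i$ with $\mathrm{Ends}(X_i)$, and proves the end space is perfect by a Hopf-type argument using only cocompactness together with a nullity condition inherited from the geometric action on $X$. The Cantor-set characterization then finishes. That $X_i$ is quasi-isometric to a bushy tree is deduced only afterward, from Manning's bottleneck criterion, so the logical dependence is the reverse of what you propose.
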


\begin{cor}
Every lattice in a product of $n$ trees  is boundary rigid.
\end{cor}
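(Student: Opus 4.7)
The plan is to deduce the corollary immediately from Theorem \ref{thm:main}. Recall that a $\cat(0)$ group $G$ is boundary rigid precisely when the visual boundaries $\bdry X$ and $\bdry Y$ are homeomorphic for any two $\cat(0)$ spaces $X, Y$ on which $G$ acts geometrically. So, given a lattice $G$ in a product of $n$ trees and two such spaces $X$ and $Y$, I would apply Theorem \ref{thm:main} once to $X$ and once to $Y$.

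Each application yields that $\bdry X$ is homeomorphic to the join of $n$ Cantor sets, and likewise for $\bdry Y$. Since being the join of $n$ copies of the Cantor set is a property determined up to homeomorphism (the join construction of topological spaces is well-defined up to homeomorphism, and the Cantor set is unique up to homeomorphism as the only compact, totally disconnected, perfect, metrizable space), one concludes $\bdry X \cong \bdry Y$ as topological spaces. This is exactly the boundary rigidity of $G$.

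There is essentially no obstacle here: the entire content of the corollary is contained in Theorem \ref{thm:main}, with the corollary serving only to translate the explicit identification of the boundary into the language of the boundary rigidity definition. The only minor point to record, if one wanted to be fully rigorous, is the uniqueness of the Cantor set up to homeomorphism and the functoriality of the join under homeomorphisms, both of which are standard facts from point-set topology.
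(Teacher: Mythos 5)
Your proof is correct and is exactly the argument the paper intends: the corollary is stated without proof because it follows immediately from Theorem~\ref{thm:main}, since any two $\cat(0)$ spaces admitting a geometric $G$-action both have visual boundary homeomorphic to the join of $n$ Cantor sets, hence homeomorphic to each other. Nothing further is needed.
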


Upon the revision of the paper, we learned that Margolis proved a much stronger result than Theorem~\ref{thm:main} \cite[Thm K]{Margolis22}. He proves boundary rigidity for finitely generated groups quasi-isometric to a product of finitely many proper cocompact non-elementary hyperbolic spaces.

There are two steps in our proof of Theorem~\ref{thm:main}.  We first deduce from the rank-rigidity results of Monod~\cite{Monod06} and Caprace--Monod~\cite{CapraceMonod09} that we may assume without loss of generality that $X$ splits as a product of $n$ convex subspaces. 	
	This will imply that  $\bdry X$ splits as a join of $n$ $0$-dimensional topological spaces. In the special case when $X$ is a CAT(0) cube complex and $X$ is either geodesically complete or the action of $G$ on $X$ is essential, we can use rank rigidity for CAT(0) cube complexes, due to Caprace-Sageev \cite{CapraceSageev2011}, to obtain a necessary splitting. We mention this case separately to illustrate the usefulness of the Caprace-Sageev Rank Rigidity Theorem in the cube complex setting.

 The second step is to prove that the $n$ $0$-dimensional subspaces obtained after the first step are indeed homeomorphic to Cantor spaces.

The paper is organized as follows.  In section ~\ref{sec:background} we give the background on lattices in products of trees as well as ends and boundaries of $\cat(0)$ spaces.  We prove the main theorem in section~\ref{sec:meinthm}.

\subsection*{Acknowledgements}
We would like to thank Nicolas Monod for his helpful comments. 
This collaboration was facilitated by WIGGD 2020, which was sponsored by the NSF under grants DMS–1552234, DMS–1651963, and DMS–1848346.
The first author was also supported by the NSF grant DMS-2105548/2203307, the second author by the Israel Science Foundation grant no.1562/19, and the last author by the Deutsche Forschungsgemeinschaft (DFG, German Research Foundation) under Germany's Excellence Strategy EXC 2044 –390685587, Mathematics M\"unster: Dynamics–Geometry–Structure.

\section{CAT(0) spaces at infinity}\label{sec:background}
\subsection{Ends of a space}
We recall the definitions and relevant facts about the space of ends of a topological space. For more details, see \cite{BridsonHaefliger}.

Let $X$ be a topological space. A \emph{ray} in $X$ is a proper map $r:[0,\infty)\to X$. A \emph{ray at $x_0$} where $x_0$ is a point of $X$, is a ray with $r(0) = x_0$.
An \emph{end} $e$ of $X$ is an equivalence class of rays in $X$ where $r_1\simeq r_2$ if and only if for every compact set $K\subseteq X$ there exists $N\geq 0$ such that $r_1([N,\infty])$ and $r_2([N, \infty])$ are contained in the same connected component of $X-K$. We denote the equivalence class of the ray $r$ by $e(r)$. 
The set of all ends of $X$ is denoted by $Ends(X)$.

Let $U$ be an open set in $X$ and $e\in Ends(X)$. We use the notation $e< U$ to  mean that for any $r:[0,\infty)\to X$ with $e(r)=e$, there exists $N\ge 0$ such that $r([N,\infty))\subseteq  U$. 

The set $X\cup Ends(X)$, denoted by $\widehat X$, can be endowed with topology that is generated by the basis consisting of the following sets:
\begin{itemize}
    \item open sets in $X$,
    \item sets of the form $U\cup \{e\in Ends(X)\ | e < U\}$ where $U$ is a connected component of $X-K$ for some compact set $K\subseteq X$.
\end{itemize}

The space $\widehat X$ is compact and is called the \emph{end compactification} of $X$.

\subsection{Visual Boundary}
Assume that $X$ is a metric space with metric $d$. Two geodesic rays $r,r'$ are \emph{asymptotic}, if there exists a constant $K>0$ such that $d(r(t), r'(t))<K$ for all $t\in[0,\infty)$.

The \emph{boundary} of $X$, denoted $\partial X$, is the set of equivalence classes of geodesic rays, where two rays are equivalent if they are asymptotic. We denote the equivalence class of a ray $r$ by $r(\infty)$.

When $X$ is a complete CAT(0) space, we can put a topology on $\partial X$ as follows. First fix a basepoint $x_0\in X$.  
The \emph{cone topology} on $\partial X$ with respect to $x_0$ is given by the neighborhood basis $\{U(r, R, \epsilon): r(\infty)\in \partial X,\, R,\epsilon>0\}$ where 
$$U(r,R,\epsilon) = \{r'(\infty)\in \partial X : r'(0) = x_0, d(r(R), r'(R))<\epsilon \}.$$

This topology seems to depend on the choice of basepoint $x_0\in X$ but in fact it does not.  
There is a well-defined change of basepoint homeomorphism between the topologies determined by different basepoints. This follows from the fact:

\begin{prop}\label{prop: change of basepoint}\cite[Prop. II.8.2]{BridsonHaefliger} If  $r$ is a geodesic ray based at $x$ in a complete CAT(0) space $X$ and $x'$ is a point not on this ray, then there exists a unique geodesic ray $r'$ with $r'(0)=x'$ that is asymptotic to $r$.
\end{prop}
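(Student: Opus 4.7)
The plan is to build $r'$ as a limit of geodesic segments from $x'$ to points far out along $r$, and then use the basic CAT(0) comparison toolkit to check asymptoticity and uniqueness.

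First I would set up the approximants. For every integer $n \geq 1$, completeness and uniqueness of geodesics in CAT(0) spaces give a unique unit-speed geodesic segment $\sigma_n \colon [0, d(x', r(n))] \to X$ with $\sigma_n(0) = x'$ and $\sigma_n(d(x', r(n))) = r(n)$. The goal is to show that, for each fixed $t \geq 0$, the sequence $\sigma_n(t)$ is Cauchy, so that one obtains a limit $r'(t) := \lim_{n\to\infty} \sigma_n(t)$ defined for all $t \geq 0$.

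To prove the Cauchy property I would compare the triangle $\Delta(x', r(m), r(n))$ (for $m < n$ both large) with its Euclidean comparison triangle $\bar\Delta$ in $\R^2$. Since $r$ is a geodesic ray, $d(r(m), r(n)) = n-m$, while $d(x', r(m)) \leq d(x', x) + m$ and $d(x', r(n)) \leq d(x', x) + n$. In $\bar\Delta$ the vertex opposite the side of length $n-m$ has two adjacent sides that are nearly parallel when $m,n$ are large compared to $d(x',x)$, so comparison points at parameter $t$ on the two sides from $\bar x'$ lie within a distance that tends to $0$ as $m,n \to \infty$. The CAT(0) inequality then forces $d(\sigma_m(t), \sigma_n(t)) \to 0$. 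A standard diagonal/Arzel\`a--Ascoli argument, together with the fact that each $\sigma_n$ is $1$-Lipschitz, shows the limit $r'$ is a unit-speed geodesic ray based at $x'$; here one uses completeness of $X$ to get the pointwise limit to exist.

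Next I would verify that $r'$ is asymptotic to $r$. Apply the CAT(0) inequality to the triangle with vertices $x$, $x'$, and $r(n)$, considering the comparison points of $r(t)$ and $\sigma_n(t)$ on its two sides emanating from $\ldots$ actually, the cleanest route is to use the flat-sector lemma: in the Euclidean comparison of the triangle $\Delta(x, x', r(n))$, the distance between the point on side $[x, r(n)]$ at distance $t$ from $x$ and the point on side $[x', r(n)]$ at distance $t$ from $x'$ is at most $d(x, x')$, by an elementary convexity argument on the comparison triangle. Passing to the limit $n \to \infty$ gives $d(r(t), r'(t)) \leq d(x, x')$ for all $t$, so the rays are asymptotic with constant $K = d(x, x')$.

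Finally, uniqueness follows from convexity of the distance function in CAT(0) spaces. If $r'_1$ and $r'_2$ are two geodesic rays based at $x'$ that are both asymptotic to $r$, then they are asymptotic to each other, so $t \mapsto d(r'_1(t), r'_2(t))$ is bounded. But this function is convex (by the CAT(0) inequality applied to triangles $\Delta(x', r'_1(t), r'_2(t))$) and vanishes at $t = 0$, so a bounded convex function on $[0, \infty)$ that starts at $0$ is identically $0$, hence $r'_1 = r'_2$. The main obstacle, and the step that requires the most care, is the Cauchy/convergence estimate, because one must extract uniform control on $d(\sigma_m(t), \sigma_n(t))$ from the CAT(0) comparison; the rest reduces to standard properties of convex functions on geodesics.
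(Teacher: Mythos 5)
Your argument is correct and is essentially the standard proof of this fact; the paper itself gives no proof here, simply quoting Proposition II.8.2 of Bridson--Haefliger, and your construction (limits of geodesics $[x',r(n)]$ via a CAT(0) comparison-triangle Cauchy estimate, asymptoticity via the bound $d(r(t),\sigma_n(t))\le d(x,x')$ from convexity in the comparison triangle, and uniqueness via bounded convex functions vanishing at $0$) is the same route taken in that reference. The only cosmetic issues are the stray ``actually, the cleanest route is'' mid-sentence and the invocation of Arzel\`a--Ascoli, which is unnecessary since the pointwise Cauchy estimate already yields a limit that is automatically a unit-speed geodesic.
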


The boundary $\partial X$ endowed with the cone topology is called the \emph{visual boundary} of $X$ and we denote it by $\partial_{\infty} X$. 
See \cite[Chap II.8]{BridsonHaefliger} for more details and properties of the visual boundary. 

If $X$ is a proper CAT(0) space, then $\bdry X$ is  compact and  there is a natural well-defined map $\partial_{\infty}X\to Ends(Y)$ sending a ray $r$ to $e(r)$. 
This map does not depend on the choice of a ray in the equivalence class of asymptotic rays. The map is a continuous surjection.  \cite[Rem II.8.10]{BridsonHaefliger}.

One important theorem we will use about the visual boundary is the following theorem of Geoghegan-Ontaneda.

\begin{thm}[{\cite{GeogheganOntaneda2007}}]\label{dimension qi invariant}
The topological dimension of $\partial_{\infty} X$ is a quasi-isometry invariant.  
In particular, if a group $G$ acts geometrically on CAT(0) spaces $X$ and $X'$, then their visual boundaries have the same topological dimension.
\end{thm}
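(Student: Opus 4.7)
The plan is to express $\dim\bdry X$ as a quasi-isometry invariant of the interior of $X$, because by the Croke--Kleiner and Wilson examples cited above, a quasi-isometry between $\cat(0)$ spaces admitting geometric actions need not extend continuously to the visual boundaries. Any argument that operates purely at infinity is therefore hopeless; the information must be transported into the bulk of $X$, where a QI acts well, and a QI-invariant bulk quantity must be shown to compute the boundary dimension.

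The route is through cohomology. By Alexandroff's theorem, a finite-dimensional compact metrizable space $Z$ satisfies $\dim Z = \max\{k:\check H^k(Z;A)\neq 0\text{ for some abelian group }A\}$. The boundary $\bdry X$ is compact and metrizable because $X$ is proper, and it is finite-dimensional because $X$ has finite asymptotic dimension under the cocompact action hypothesis (combined with standard bounds on $\dim\bdry X$ coming from Z-set compactifications of finite-dimensional spaces). So it suffices to prove that $\check H^{*}(\bdry X)$ is a QI-invariant of $X$.

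Next, because $X$ is contractible and $(X\cup\bdry X,\bdry X)$ is a Z-set compactification in the sense of Bestvina--Mess, one obtains an isomorphism $\check H^{k}(\bdry X)\cong H^{k+1}_c(X)$ by writing the compactly supported cohomology as an inverse limit over the cofinal system of complements of metric balls $X\setminus B(x_0,R)$ and using that these neighborhoods shape-retract onto $\bdry X$ inside the compactification. Finally, $X$ is uniformly contractible because straight-line $\cat(0)$ homotopies combined with cocompactness make nullhomotopies of bounded subsets uniformly controlled; Roe's theorem then identifies $H^{*}_c(X)$ with coarse cohomology $HX^{*}(X)$, which is by construction invariant under quasi-isometries of proper metric spaces. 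Chaining the three isomorphisms gives that $\dim\bdry X$ depends only on the QI type of $X$, which is the claim.

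The main obstacle is the middle identification $\check H^{k}(\bdry X)\cong H^{k+1}_c(X)$, which requires the full Z-set compactification package for $\cat(0)$ spaces together with careful coefficient bookkeeping: $X$ need not be a manifold, so one cannot shortcut through Poincaré duality and must instead verify that the inverse system of neighborhoods of $\bdry X$ is pro-isomorphic to the system of complements of a cofinal sequence of compacta in $X$. The uniform contractibility hypothesis needed for Roe's comparison isomorphism is a routine consequence of $\cat(0)$ geometry under a cocompact action, but checking bounded geometry at the coarse level and matching coefficient systems between the shape-theoretic and coarse-cohomological sides is where the technical content sits.
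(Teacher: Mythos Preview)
The paper does not prove this statement; it is quoted as a black-box result with the citation \cite{GeogheganOntaneda2007} and then invoked later in the proofs of Theorem~\ref{thm:join} and Proposition~\ref{prop:factors are cantor sets}. So there is no proof in the paper to compare your proposal against.

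That said, your outline is close to the actual Geoghegan--Ontaneda strategy: pass from $\dim\bdry X$ to $\check H^{*}(\bdry X)$, identify this with $H^{*+1}_c(X)$ via the $Z$-set compactification, and then exploit that compactly supported cohomology of a uniformly contractible proper space is a coarse (hence quasi-isometry) invariant. One point deserves correction. You justify finite-dimensionality of $\bdry X$ by asserting that $X$ has finite asymptotic dimension under a cocompact action; this is an open problem for general $\cat(0)$ groups, so that step as written is a gap. The finite-dimensionality of $\bdry X$ in the cocompact setting is established by other means (e.g.\ Swenson, or directly in Geoghegan--Ontaneda via the geometric-dimension/cohomological-dimension comparison for groups of type $F_\infty$), not via $\asdim$. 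With that replacement your sketch is a faithful summary of the cited argument.
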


We will also use the following Lemma due to Papasoglu-Swenson.
\begin{lem}[{\cite[Lem 26]{PapasogluSwenson09}}]\label{lem:[papasogluSwenson}
Let $G$ be a group acting geometrically on a CAT(0) space $X$.
Then $G$ virtually stabilizes a finite subset $A$ of the visual boundary of $X$ if and only if $G$ virtually has $\mathbb Z$ as a direct factor. 
\end{lem}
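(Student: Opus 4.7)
The proof proposal splits into the two implications of the biconditional.

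For the ``if'' direction, suppose a finite-index subgroup $G' \le G$ decomposes as $H \times \langle t\rangle$ with $t$ of infinite order. Since $\langle t\rangle$ acts properly on $X$, the infinite-order element $t$ admits no fixed point and is therefore hyperbolic. By the Splitting Theorem for centralizers of semisimple isometries \cite{BridsonHaefliger}, $\mathrm{Min}(t)$ decomposes isometrically as $Y_0 \times \mathbb R$ with $t$ translating the $\mathbb R$-factor, and the full centralizer of $t$ (which contains $G'$) preserves this decomposition. Hence $G'$ stabilizes the two-point set at infinity corresponding to the $\mathbb R$-factor, so $G$ virtually stabilizes this two-point subset of $\bdry X$.

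For the ``only if'' direction, pass to a finite-index $G' \le G$ fixing $A$ pointwise, pick $\xi \in A$, and let $b_\xi$ be a Busemann function centered at $\xi$. Because $G'$ fixes $\xi$, the difference $b_\xi \circ g - b_\xi$ is a constant for each $g \in G'$; call this constant $\beta(g)$. Then $\beta : G' \to \mathbb R$ is a homomorphism, and it must be nontrivial: if $\beta \equiv 0$ then $b_\xi$ would be $G'$-invariant, hence bounded on a compact fundamental domain and thus on all of $X$, contradicting the unboundedness of $b_\xi$ along rays to $\xi$. Since $b_\xi$ is $1$-Lipschitz, $|\beta(g)| \le \tau(g)$ for every $g$, and a standard conjugation-and-properness argument shows that translation lengths of hyperbolic isometries in a proper cocompact action are uniformly bounded below by a positive constant. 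Thus $\beta(G')$ is a nontrivial discrete subgroup of $\mathbb R$, hence infinite cyclic. Choosing $g_0 \in G'$ with $\beta(g_0)$ generating $\beta(G')$ produces the short exact sequence
\[
1 \longrightarrow \ker \beta \longrightarrow G' \xrightarrow{\;\beta\;} \mathbb Z \longrightarrow 1,
\]
with $g_0$ hyperbolic.

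The heart of the argument, and the main obstacle, is upgrading this extension to a virtual direct product. The strategy is to invoke a splitting theorem for CAT(0) groups fixing a point at infinity (in the spirit of Caprace--Monod \cite{CapraceMonod09}) to produce a cocompact closed convex subspace $Y \subseteq X$, invariant under some finite-index $G'' \le G'$, that splits isometrically as $Y_0 \times \mathbb R$ with $\beta|_{G''}$ realized by translation along the $\mathbb R$-factor. Since $G''$ then preserves this splitting, it decomposes as $(\ker\beta \cap G'') \times \langle g_0^k\rangle$ for some $k\ge 1$, giving the sought-after virtual $\mathbb Z$ direct factor of $G$. The delicate technical point is verifying that elements of $\ker\beta$ act on $Y$ preserving the product structure, so that the $\mathbb R$-factor becomes genuinely central rather than merely normal; this uses cocompactness of the $G'$-action together with the finiteness of $A$ (which forces the relevant orbits at infinity to stay bounded).
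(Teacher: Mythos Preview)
The paper does \emph{not} attempt to prove the ``only if'' direction at all: it simply cites \cite[Lem~26]{PapasogluSwenson09} for the implication ``$G$ virtually stabilizes a finite set in $\bdry X$ $\Rightarrow$ $G$ virtually has a $\mathbb Z$ factor,'' and then supplies a short argument only for the converse. Your ``if'' direction is correct and essentially the same as the paper's; the paper's version is marginally more elementary in that it observes directly that any element commuting with $t$ fixes the two endpoints $t^{\pm\infty}\in\bdry X$, without invoking the full Min-set splitting, but the content is identical.

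Your sketch of the ``only if'' direction, however, contains a genuine gap in the discreteness step. From $|\beta(g)|\le \tau(g)$ and a uniform positive lower bound on $\tau(g)$ for hyperbolic $g$ you cannot conclude anything about a \emph{lower} bound on $|\beta(g)|$: those two inequalities are compatible with $\beta(g)$ being arbitrarily small while $\tau(g)$ is large (for instance when the axis of $g$ makes a small Tits angle with $\xi$). So the asserted discreteness of $\beta(G')\subseteq\mathbb R$ is not established by your argument, and a finitely generated subgroup of $\mathbb R$ can certainly be dense. Likewise, the final step---promoting the extension $1\to\ker\beta\to G'\to\mathbb Z\to 1$ to a virtual direct product via a Caprace--Monod-type splitting---is, as you yourself flag, only gestured at; making this precise is exactly where the substance of the Papasoglu--Swenson argument lies. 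In short: the paper's decision to cite rather than reprove this direction reflects that it genuinely requires the machinery of \cite{PapasogluSwenson09}, and your Busemann-function outline, while pointing in a reasonable direction, does not yet constitute a proof.
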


\begin{proof} The contents of {\cite[Lem 26]{PapasogluSwenson09}}]\label{lem:[papasogluSwenson} proves that if $G$ virtually stabilizes a finite subset of $\bdry{X}$, then $G$ virtually has $\mathbb Z$ as a direct factor.  We prove the other direction here.  

Suppose $G$ virtually has $\mathbb Z$ as direct factor.  Thus there is a finite index subgroup of $G$ of the form $H\times\mathbb Z$.  Let $g$ be the generator of the virtual $\mathbb Z$ factor.  Notice that $C_G(g)$, the centralizer of $g$ in $G$, contains this subgroup and is therefore also finite index in $G$. Then for each $x\in G$, $x^n$ is in $C_G(g)$.  Every element in $C_G(g)$ fixes $\{g^{\pm\infty}\}$, the endpoints of $g$ in $\partial_{\infty} X$.  Thus every element of the finite index subgroup $H\times\mathbb Z$ fixes this finite set in the boundary as needed. 

\end{proof}

We do not want to assume $X$ is geodesically complete for our theorem so we use work of Caprace--Monod in \cite{CapraceMonod09}
to get around this issue.  We explain how to do this here. 

An action of a group $G$ on a CAT(0) space $X$ is \emph{minimal} if there does not exists a non-empty $G$-invariant closed convex subset of $X$. 
A CAT(0) space $X$ is \emph{minimal} if the action of its full isometry group is minimal.

A subset $X'\subseteq X$ is \emph{quasi-dense} in $X$ if there exists a $D>0$ such that each point of $X$ is within distance $D$ of $X'$.  In particular, $\partial X=\partial X'$ as sets.  
Thus, for purposes of studying the boundary, we can always pass to a quasi-dense subset without losing any information. The following theorem will be used to avoid the extra assumption of geodesic completeness.

\begin{thm}[{\cite{CapraceMonod09}, see also \cite[Ex II.4 and Prop III.10]{Caprace14Notes}}]\label{thm:CapraceMonod}
\label{thmCapraceMonod}
Let $G$ be a group acting geometrically on a CAT(0) space $X$. 
Then $G$ stabilizes a closed, convex, quasi-dense subspace $X'\subseteq X$ such that $G$ acts minimally on $X'$.
In particular, $\partial X =\partial X'$ as sets. 

Moreover, $\partial_T X$ splits as a join if and only if $X'$ splits as a product $X_1\times X_2$.
\end{thm}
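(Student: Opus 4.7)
The plan is to establish the two assertions in sequence. First, I would construct a minimal $G$-invariant closed convex quasi-dense subspace $X'$ by applying Zorn's lemma to the poset $\mathcal{P}$ of non-empty closed convex $G$-invariant subspaces of $X$, ordered by inclusion. The key verification is that any descending chain $\{X_\alpha\}$ in $\mathcal{P}$ has non-empty intersection. I would deduce this from cocompactness: the images of the $X_\alpha$ in the compact quotient $G \backslash X$ form a descending chain of closed sets with non-empty intersection, and any preimage of a point in that intersection lies in every $X_\alpha$ by $G$-invariance. The resulting intersection is itself in $\mathcal{P}$, so Zorn's lemma yields a minimal element $X'$, which is exactly the statement that the $G$-action on $X'$ is minimal.

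Quasi-density is then immediate: for any $x' \in X'$, the orbit $G \cdot x'$ lies in $X'$ and is $D$-dense in $X$ for some $D$ by cocompactness, so $X'$ itself is $D$-dense. Because $X'$ is closed, convex, and complete, every geodesic ray in $X$ is asymptotic to a geodesic ray in $X'$ (obtained by starting near the base of $r$ and heading toward $r(\infty)$ via the $1$-Lipschitz projection onto $X'$), giving $\partial X = \partial X'$ as sets.

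For the moreover clause, the forward direction follows from the standard Tits-boundary formula $\partial_T(X_1 \times X_2) = \partial_T X_1 \ast \partial_T X_2$ together with $\partial X = \partial X'$. The reverse direction is the substantive content: given a nontrivial join decomposition $\partial_T X = S_1 \ast S_2$, my approach would be to realize each factor $S_i$ as the Tits boundary of a canonical closed convex subspace $Y_i \subseteq X'$, verify that all Tits angles between $\partial_T Y_1$ and $\partial_T Y_2$ equal $\pi/2$, and then invoke Monod's Sandwich Lemma to promote this orthogonality into a metric product decomposition $X' = X_1 \times X_2$. The minimality of the $G$-action on $X'$ is what forces the orthogonal pieces to exhaust $X'$ rather than to sit inside a proper factor.

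The main obstacle is the join-to-product direction. Without minimality, a Tits-join decomposition could merely reflect a product structure on some proper convex subspace of $X'$, leaving $X'$ itself irreducible. The technical core---combining the Sandwich Lemma with a careful analysis of minimal $\cat(0)$ actions to produce honest product factors out of boundary join factors---is the heart of the Caprace--Monod theory, and is the part of the proof I would expect to either cite wholesale or reproduce with the most care.
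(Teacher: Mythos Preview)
The paper does not prove this theorem; it is quoted as a cited result from \cite{CapraceMonod09} and \cite{Caprace14Notes}, with no argument given. So there is no in-paper proof to compare your proposal against.

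That said, your outline is essentially the standard route taken in the cited references. The Zorn's lemma argument for the first part is correct: since each $X_\alpha$ is $G$-invariant, $\pi^{-1}(\pi(X_\alpha))=X_\alpha$, so the images $\pi(X_\alpha)$ are closed in the compact quotient $G\backslash X$; any chain has the finite intersection property, hence non-empty intersection downstairs, and $G$-invariance lifts this to a common point upstairs. Quasi-density and $\partial X=\partial X'$ follow exactly as you say. One small point worth making explicit is that the Tits metrics also agree, not just the underlying sets: because $X'$ is convex, comparison angles between rays in $X'$ are the same whether computed in $X'$ or in $X$, so $\partial_T X'\hookrightarrow\partial_T X$ is an isometry onto $\partial X'=\partial X$. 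You need this for the ``moreover'' clause.

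Your identification of the join-to-product direction as the substantive content is accurate, and your plan to invoke the Caprace--Monod machinery (minimality plus the product decomposition theorem) is exactly what the references do. The one phrase I would tighten is the appeal to ``Monod's Sandwich Lemma'': the relevant tool here is not the Sandwich Lemma per se but the decomposition theorem for minimal CAT(0) spaces whose Tits boundary splits as a join (this is \cite[Prop.~III.10]{Caprace14Notes} or the corresponding statement in \cite{CapraceMonod09}). The Sandwich Lemma is a different statement about closed convex subsets at bounded Hausdorff distance. Otherwise your sketch is sound and matches the literature the paper is citing.
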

The subspace $X'$ in the above theorem is not necessarily unique. 
In fact, every closed, convex, quasi-dense subspace of $X$ on which $G$ acts minimally admits a splitting as a product, provided that $\partial_T X$ splits as a join.

\section{Lattices in product of trees}\label{sec:lattices}
In this section, we summarize facts about lattices in products of trees used in the proof of Theorem~\ref{thm:main}. A simplicial tree $T$ is \emph{bushy}, if it is not quasi-isometric to a point or a line. For example, a regular tree of valence $\geq 3$ is bushy. 

\begin{assumption}
Throughout the paper we assume that all the trees we consider are bushy trees of bounded valence.
\end{assumption}
We view $T$ as a metric space, with the path metric, where each edge has length $1$. The automorphism group $\Aut(T)$ is a group of all isometries $T\to T$, i.e.\ permutations of the vertex sets that preserve the adjacency. The group $\Aut(T)$ endowed with a compact-open topology, is a locally compact group. 
We now consider $n$ trees $T_1, \dots, T_n$. The product $T_1\times\dots\times T_n$ has a natural structure of a cube complex. It is easy to verify that each vertex link  is a complete $n$-partite graph and, in particular, it is a flag simplicial complex. Thus $T_1\times\dots\times T_n$, with the path metric induced by the Euclidean metric on each cube, is a CAT(0) cube complex.

\begin{exa}
Let $G$ be a group on four generators $a,b,x,y$ with four relations:
\begin{center}
 \begin{tikzpicture}
\tikzset{>=latex}
\begin{scope}[thick, decoration={
    markings,
    mark=at position 0.62 with {\arrow{latex}}}
    ] 
\node[circle, draw, fill, inner sep = 0pt,minimum width = 1pt] (a) at (0,0) {};
\node[circle, draw, fill, inner sep = 0pt, minimum width = 1pt] (b) at (0,1) {};
\node[circle, draw, fill, inner sep = 0pt, minimum width = 1pt] (c) at (1,1) {};
\node[circle, draw, fill, inner sep = 0pt, minimum width = 1pt] (d) at (1,0) {};
\draw[postaction={decorate}] (a) -- node[left] {$a$}  (b);
\draw[postaction={decorate}] (b) --node[above] {$x$} (c);
\draw[postaction={decorate}] (d) --node[right] {$b$}  (c);
\draw[postaction={decorate}] (d) --node[below] {$x$} (a);
\end{scope}
\begin{scope}[shift = {(2.75,0)}, thick, decoration={
    markings,
    mark=at position 0.62 with {\arrow{>}}}
    ] 
\node[circle, draw, fill, inner sep = 0pt,minimum width = 1pt] (a) at (0,0) {};
\node[circle, draw, fill, inner sep = 0pt, minimum width = 1pt] (b) at (0,1) {};
\node[circle, draw, fill, inner sep = 0pt, minimum width = 1pt] (c) at (1,1) {};
\node[circle, draw, fill, inner sep = 0pt, minimum width = 1pt] (d) at (1,0) {};
\draw[postaction={decorate}] (a) -- node[left] {$a$}  (b);
\draw[postaction={decorate}] (b) --node[above] {$y$} (c);
\draw[postaction={decorate}] (c) --node[right] {$b$}  (d);
\draw[postaction={decorate}] (d) --node[below] {$y$} (a);
\end{scope}
\begin{scope}[shift = {(5.5,0)},thick, decoration={
    markings,
    mark=at position 0.62 with {\arrow{>}}}
    ] 
\node[circle, draw, fill, inner sep = 0pt,minimum width = 1pt] (a) at (0,0) {};
\node[circle, draw, fill, inner sep = 0pt, minimum width = 1pt] (b) at (0,1) {};
\node[circle, draw, fill, inner sep = 0pt, minimum width = 1pt] (c) at (1,1) {};
\node[circle, draw, fill, inner sep = 0pt, minimum width = 1pt] (d) at (1,0) {};
\draw[postaction={decorate}] (a) -- node[left] {$a$}  (b);
\draw[postaction={decorate}] (c) --node[above] {$y$} (b);
\draw[postaction={decorate}] (c) --node[right] {$a$}  (d);
\draw[postaction={decorate}] (a) --node[below] {$x$} (d);
\end{scope}
\begin{scope}[shift = {(8.25,0)},thick, decoration={
    markings,
    mark=at position 0.62 with {\arrow{>}}}
    ] 
\node[circle, draw, fill, inner sep = 0pt,minimum width = 1pt] (a) at (0,0) {};
\node[circle, draw, fill, inner sep = 0pt, minimum width = 1pt] (b) at (0,1) {};
\node[circle, draw, fill, inner sep = 0pt, minimum width = 1pt] (c) at (1,1) {};
\node[circle, draw, fill, inner sep = 0pt, minimum width = 1pt] (d) at (1,0) {};
\draw[postaction={decorate}] (a) -- node[left] {$b$}  (b);
\draw[postaction={decorate}] (b) --node[above] {$x$} (c);
\draw[postaction={decorate}] (c) --node[right] {$b$}  (d);
\draw[postaction={decorate}] (a) --node[below] {$y$} (d);
\end{scope}
\end{tikzpicture}
\end{center}
The group $G$ is an irreducible lattice in the product of two copies of a $4$-valent tree \cite{JanzenWise09}.
\end{exa}

The presentation complex of $G$ in the example above, and for all torsion-free lattices in product of two trees more generally, is a non-positively curved square complex where each vertex link is a complete bipartite graph. 
Such square complexes are referred to as \emph{complete square complexes}. 
We refer the reader to \cite{WiseCSC} for the theory of complete square complexes.
Our approach in the $n$-factor case is inspired by Wise's work in the two factor case. 

Let $G$ be a lattice in  $\Aut(T_{1})\times\dots \times \Aut(T_n)$, and let $Z$ be the orbispace obtained as the quotient of $T_{1}\times\dots \times T_n$ by the action of $G$.
Let $q:T_{1}\times\dots \times T_n \to Z$ denotes the quotient map, which is an orbispace covering map. 
Let $v\in T_1\times\dots\times T_n$ be a basepoint. We identify $G$ with the orbispace fundamental group $\pi_1(Z,\overline v)$ where $\overline v = q(v)$.

All the edges of the product $T_{1}\times\dots \times T_n$ can be labelled by $1,\dots, n$ indicating in which factor the edge lies. 
Since the action of $G$ on $T_{1}\times\dots \times T_n$ preserves the factors, the underlying space of $Z$ is a $n$-dimensional cube complex whose edges can also be labelled by $1,\dots, n$ according to the label on the lift of the edge. 
Consider the subspace of $Z$ consisting of all the edges with label $i$, and denote its connected component containing $\overline v$ by $Z_i$. Note that $Z_i$ is a $1$-dimensional cube complex, i.e.\ $Z_i$ is a graph.

For each $i=1$, let $pr_i: T_1\times\dots\times T_n
\to T_1\times\dots\times \widehat{T_i}\times\dots\times T_n$ be the projection onto $n-1$ factors where $\hat{\cdot}$ denotes an omission of a factor.
Similarly, let $\rho_i:G\to \Aut(T_1)\times\dots\times \widehat\Aut({T_i})\times\dots\times \Aut(T_n)$ denote the projected action of $G$ on $T_1\times\dots\times \widehat{T_i}\times\dots\times T_n$.
For each $i= 1,\dots, n$ we define a subgroup $G_i\subseteq G$ as the stabilizer of $pr_i(v)$ with respect to the action $\rho_i$, i.e.
\begin{equation}
G_i = Stab_{T_1\times\dots\times \widehat{T_i}\times\dots\times T_n}(pr_i(v)).
\tag{$*$}\label{eq:G_i}\end{equation}

In particular, $G_i$ fixes setwise a copy of $T_i$ in the original action of $G$ on $T_1\times \dots \times T_n$. The coordinates other than the $i$-th one of this copy of $T_i$ are the coordinates of the basepoint $v$ of $T_1\times \dots \times T_n$. We abuse notation to denote that copy of $T_i$ by $T_i\times \{pr_i(v)\}$.

We will prove that the action of $G_i$ on $T_i\times \{pr_i(v)\}$ is geometric. We first need the following lemma.

\begin{lem}\label{lem:torsion groups acting on trees}
Let $T$ be an unbounded simplicial tree and let $H$ be a group acting geometrically on $T$. Then $H$ contains an infinite order element.
\end{lem}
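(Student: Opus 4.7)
The plan is to argue by contradiction: suppose every element of $H$ has finite order.

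First, I would observe that $H$ must be infinite. Since $H$ acts geometrically on the geodesic space $T$, the group $H$ is finitely generated; if $H$ were finite, then $T$ would equal a finite union of $H$-translates of a compact fundamental domain, making $T$ compact and contradicting the assumption that $T$ is unbounded. Next, after replacing $T$ by its barycentric subdivision (which preserves all the hypotheses and rules out edge inversions), every finite-order isometry of $T$ is elliptic and has a nonempty fixed set, which is moreover a subtree. Thus for each $h \in H$ the subtree $\Fix(h) \subseteq T$ is nonempty.

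The key ingredient is the standard dichotomy for isometries of trees: if $g, g' \in \Isom(T)$ are both elliptic and $\Fix(g) \cap \Fix(g') = \emptyset$, then $gg'$ is hyperbolic with translation length $2\,d(\Fix(g),\Fix(g'))$, and in particular has infinite order. Applying this to pairs of generators $h_1, \dots, h_k$ of $H$: since every element of $H$, and in particular each product $h_ih_j$, is assumed to have finite order, each pair of fixed subtrees must satisfy $\Fix(h_i) \cap \Fix(h_j) \neq \emptyset$. The Helly property for a finite family of pairwise-intersecting subtrees of a tree then yields a common fixed vertex $v \in \bigcap_i \Fix(h_i)$, which is therefore fixed by all of $H$. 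Properness of the action forces $\Stab_H(v)$ to be finite, so $H$ itself is finite, contradicting the previous paragraph. Hence some element of $H$ has infinite order. The main point to verify carefully is the elliptic/hyperbolic dichotomy together with the Helly property for subtrees, both of which are standard facts about group actions on trees.
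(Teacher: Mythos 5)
Your proof is correct and follows essentially the same route as the paper's: assume all elements have finite order, note that two elliptic isometries with disjoint fixed sets would have a hyperbolic product, deduce a global fixed point, and derive a contradiction with properness and cocompactness on an unbounded tree. You simply spell out the details (barycentric subdivision to avoid inversions, the Helly property for the finitely many generators) that the paper leaves implicit.
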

\begin{proof}
Suppose that $H$ contains no infinite order  elements. Every finite order isometry of a tree has a fixed point. There cannot exist two elements of $H$ with disjoint sets of fixed points, as their product would have infinite order. Thus the group $H$ has a global fixed point. By the properness of the action, $H$ must be finite, but this is impossible since $T$ is unbounded and the quotient of $T$ by the action of $H$ is finite.
\end{proof}

\begin{lem}\label{lem:action of Gi}
For each $i=1, \dots, n$, the group $G_i$ acts geometrically on $T_i\times \{pr_i(v)\}$, and contains an infinite order element.
\end{lem}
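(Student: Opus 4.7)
The plan is to verify the three properties of a geometric action in turn --- action by isometries, properness, cocompactness --- and then deduce the existence of an infinite-order element directly from Lemma~\ref{lem:torsion groups acting on trees}.

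First, since $G$ preserves the factor decomposition and, by definition, any $g \in G_i$ fixes $pr_i(v)$ under the projected action $\rho_i$, such $g$ sends $T_i \times \{pr_i(v)\}$ to itself and acts there as an isometry of the tree $T_i$. Properness is inherited: any compact $K \subseteq T_i \times \{pr_i(v)\}$ is also compact in the ambient product, where $G$ already moves $K$ off itself for all but finitely many elements; restricting to $G_i$ preserves this.

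For cocompactness, I would pass to the orbispace quotient. The quotient map $q: T_1 \times \dots \times T_n \to Z$ sends $T_i \times \{pr_i(v)\}$ into the label-$i$ subcomplex of $Z$; since the image is connected and contains $\bar{v}$, it lies in $Z_i$. Conversely, any path in $Z_i$ starting at $\bar{v}$ lifts to a path in the label-$i$ subcomplex of $T_1 \times \dots \times T_n$ starting at $v$, and that path necessarily stays inside $T_i \times \{pr_i(v)\}$; so $q$ surjects onto $Z_i$. Since each point of a product lies on a unique slice in the $i$-th direction, any $g \in G$ that takes one point of $T_i \times \{pr_i(v)\}$ back to this slice must preserve the slice entirely --- that is, $g \in G_i$. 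Thus the fibres of $q|_{T_i \times \{pr_i(v)\}}$ are precisely the $G_i$-orbits, and $(T_i \times \{pr_i(v)\})/G_i \cong Z_i$. Finally, $Z$ is compact by cocompactness of the $G$-action, and $Z_i$ is a closed subcomplex, hence also compact. This forces $G_i$ to act cocompactly on $T_i \times \{pr_i(v)\}$.

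With the geometric action established and $T_i$ unbounded (because it is bushy), Lemma~\ref{lem:torsion groups acting on trees} immediately supplies an infinite-order element of $G_i$. The main (though still routine) obstacle is the covering-space-style path-lifting argument identifying $q\bigl(T_i \times \{pr_i(v)\}\bigr)$ with $Z_i$; once that is in hand, everything else follows directly from the definitions together with compactness of $Z$.
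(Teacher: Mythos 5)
Your proof is correct and follows essentially the same route as the paper: both hinge on identifying the image of the slice $T_i\times\{pr_i(v)\}$ under the quotient map with the finite graph $Z_i$ and deducing cocompactness from its compactness, then invoking Lemma~\ref{lem:torsion groups acting on trees} for the infinite-order element. The only (valid) variation is that you identify the $G_i$-orbits on the slice directly with the points of $Z_i$ via the observation that any $g\in G$ carrying one slice point to another must lie in $G_i$, whereas the paper instead embeds $\pi_1(Z_1,\overline v)$ as a cocompactly-acting subgroup of $G_i$ via a retraction argument.
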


\begin{proof}
We will prove the statement for $i=1$.
The proof is analgous for every $i=1,\dots, n$.

The properness of the action of $G_i$ on $T_i\times \{pr_i(v)\}$ follows from \cite[Ch. II, Thm 6.10]{BridsonHaefliger}. 
In order to show that the action is cocompact, we first consider the restriction $T_1\times \{pr_1(v)\} \to Z_1$ of the quotient map $q$, which is also an orbispace covering map. Note that $Z_1$ is a finite graph, as a subspace of a finite cube complex, and $T_1\times \{pr_1(v)\}$ is the universal cover of $Z_1$. In particular, the action of  $\pi_1(Z_1, \overline v)$ on $T_1\times \{pr_1(v)\}$ is cocompact.
Moreover, by Lemma~\ref{lem:torsion groups acting on trees}, the orbispace fundamental group $\pi_1(Z_1, \overline v)$ has an infinite order element.

Let us now show that $\pi_1(Z_1, \overline v)$ embeds as a subgroup of $G_1$. First we check that the map between the orbispace fundamental groups $\pi_1(Z_1, \overline v)\to \pi_1(X, \overline v)$ induced by the inclusion $Z_1\subseteq X$ is injective. Take $g\in \pi_1(Z_1, \overline v)$, and denote its image in $\pi_1(X, \overline v)$ by $\hat g$. We can view both $\hat g$ and $g$ as Deck transformation of $T_1\times \dots \times T_n$ and $T_1\times \{pr_1(v)\}$ respectively. We have $g = p \circ\hat g \circ\iota$, where $\iota$ is the inclusion $\iota:T_1\times \{pr_1(v)\} \hookrightarrow T_1\times \dots \times T_n$, and $p: T_1\times \dots \times T_n\to T_1\times \{pr_1(v)\}$ is the projection onto the basepoint in the last $n-1$ trees. In particular, since $g$ is nontrivial, we deduce that $\hat g$ is nontrivial. 
This also shows that $\hat g$ maps $v$ to $g(v)$ whose last $n-1$ coordinates are $pr_1(v)$. Thus $\rho_1(\hat g)$ belongs to the stabilizer of $pr_1(v)$, and so $\hat g\in G_1$. Since the action of $\pi_1(Z_1, \overline v)$ on $T_1\times\{pr_1(v)\}$ is cocompact, and $\pi_1(Z_1, \overline v)\subseteq G_1$, we conclude that the action of $G_1$ on $T_1\times\{pr_1(v)\}$ is cocompact.
\end{proof}

\begin{lem}\label{lem: no Z factor}
A lattice $G$ in $\Aut(T_1)\times \dots \times \Aut(T_n)$ has no infinite order central elements. In particular, $G$ does not have a direct $\mathbb Z$ factor. 
\end{lem}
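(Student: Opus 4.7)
The plan is to argue by contradiction: suppose $g\in G$ is a central element of infinite order, and derive a contradiction with the assumption that every $T_i$ is bushy. The key observation will be that centrality forces the whole group to preserve the pair of points at infinity of one factor that $g$ fixes, and combined with the cocompact action of an appropriate subgroup this collapses one tree to a line.

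First, I would verify that $g$ must be hyperbolic on $X = T_1 \times \dots \times T_n$. Indeed, the action of $G$ on $X$ is proper, so every point stabilizer is finite and any elliptic element has finite order; hence the infinite order element $g$ cannot be elliptic. Since isometries of complete CAT(0) spaces realized by groups acting geometrically are semisimple, $g$ must therefore be hyperbolic. Writing $g$ coordinatewise as $(g_1, \dots, g_n)$ with $g_i \in \Aut(T_i)$, the formula for translation length on an $\ell^2$-product of CAT(0) spaces, $\tau(g)^2 = \sum_i \tau(g_i)^2$, shows that at least one of the $g_i$ is itself hyperbolic on $T_i$. After relabelling assume that $g_1$ is hyperbolic on $T_1$, with axis $A_1 \subset T_1$ and distinct endpoints $\{g_1^{+}, g_1^{-}\} \subset \partial T_1$.

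Next, I would exploit centrality: for every $h \in G$ with first coordinate $h_1 \in \Aut(T_1)$, the fact that $h$ commutes with $g$ forces $h_1$ to commute with $g_1$ in $\Aut(T_1)$. Any automorphism commuting with the hyperbolic $g_1$ preserves the pair $\{g_1^{\pm}\}$ and therefore the axis $A_1$ setwise. Restricting attention to the subgroup $G_1$ from \eqref{eq:G_i}, Lemma~\ref{lem:action of Gi} tells me that the first coordinates of elements of $G_1$ act cocompactly on $T_1$. The main step will be converting a cocompact action preserving a line into a strong bound on the geometry of $T_1$ itself; this is where the bushy hypothesis gets used.

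To finish, I would observe that because $G_1$ preserves $A_1$ setwise under its first-coordinate action, the function $v \mapsto d(v, A_1)$ on vertices of $T_1$ is constant on each orbit, and cocompactness produces only finitely many vertex orbits, so this distance is uniformly bounded. Therefore $T_1$ lies in a bounded neighborhood of $A_1 \cong \mathbb{R}$ and is quasi-isometric to $\mathbb{R}$, contradicting the bushy assumption on $T_1$. The ``in particular'' clause is then immediate, since a generator of a direct $\mathbb{Z}$ factor of $G$ would be a central element of infinite order.
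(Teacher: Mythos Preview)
Your proof is correct and follows essentially the same strategy as the paper's: from centrality of $g$ deduce that all of $G$ (and in particular $G_1$) fixes the two endpoints $g_1^{\pm}\in\partial T_1$, and then derive a contradiction from the geometric action of $G_1$ on $T_1$ furnished by Lemma~\ref{lem:action of Gi}. The only difference is presentational. The paper phrases the endgame at the level of the boundary, simply asserting that a group acting geometrically on a bushy tree cannot fix a point of $\partial T_i$; you instead unwind this fact explicitly, observing that fixing $\{g_1^{\pm}\}$ means preserving the axis $A_1$, and that a cocompact action preserving a line forces the tree into a bounded neighborhood of that line. Your version is slightly more self-contained; the paper's is terser. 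One incidental remark: you did not really need to pass to $G_1$, since the projection of $G$ to $\Aut(T_1)$ already acts cocompactly on $T_1$ (project a compact fundamental domain), but invoking Lemma~\ref{lem:action of Gi} is perfectly fine and mirrors what the paper does.
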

\begin{proof}
The second part follows directly from the first part, as if $G$ has a direct $\mathbb Z$ factors, then it contains an infinite order central element. 

Let $g$ be an infinite order central element. Then the points at infinity $g^{\pm\infty}$ are fixed by $G$, as $G$ centralizes $\langle g\rangle$. 
Since $G$ preserves the factors of $T_1\times\dots\times T_n$, the existence of a global fixed point in $\bdry T_1 *\dots * \bdry T_n$ of the action of $G$ implies that $G$ fixes a point in at least one of $\bdry T_i$. 
By Lemma~\ref{lem:action of Gi}, the group $G_i$ has no fixed point in $\bdry T_i$ since the action of $G_i$ on $T_i$ is geometric. In particular, $G$ has no fixed point in $\bdry T_i$. 
This completes the proof. 
\end{proof}

\section{Proof of Theorem~\ref{thm:main}}\label{sec:meinthm}

In order to prove the main theorem, we first conclude from Monod~\cite{Monod06} and Caprace--Monod~\cite{CapraceMonod09} that $X$ splits as a metric product. Afterwards, we show that the boundary of each factor of the metric product is a Cantor set.

\subsection{Splitting}\label{sec:splittingNEW}
	Let $G$ be a lattice in a product of $n$ trees acting geometrically on a CAT(0) space $X$. Recall that we may assume that $G$ acts minimally on $X$ by Theorem~\ref{thmCapraceMonod}. We conclude the following theorem from the rank-rigidity results of Monod and Caprace--Monod:
	
	\begin{thm}[\cite{Monod06,CapraceMonod09}]\label{thm:join}
	 	Let $G<\Aut(T_1) \times \dots \times\Aut(T_n)$ be an irreducible lattice in a product   of $n$  bushy trees $T_1,\dots,T_n$. 
	 	Suppose $G$ acts geometrically and minimally by on a CAT(0) space $X$. 
	 Then $X$ splits as a metric product of $n$ unbounded factors $X_1
  \times \dots \times X_n$, and the action of $G$ preserves the decomposition of $X$.
\end{thm}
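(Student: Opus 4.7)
The plan is to apply the rank-rigidity machinery for actions on CAT(0) spaces developed by Monod~\cite{Monod06} and Caprace--Monod~\cite{CapraceMonod09}. By Theorem~\ref{thm:CapraceMonod} we may replace $X$ by a closed, convex, quasi-dense $G$-invariant subspace on which $G$ acts minimally; this does not change $\bdry X$. Since $G$ acts geometrically, $X$ is proper and its Tits boundary is finite-dimensional (using Theorem~\ref{dimension qi invariant} and the fact that $G$ also acts geometrically on the product of trees, whose boundary is $(n-1)$-dimensional).

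The first step is to verify that $G$ fixes no point of $\bdry X$. Under minimality, a global fixed point at infinity forces, via the Adams--Ballmann-type arguments encapsulated in \cite{CapraceMonod09}, a Euclidean de Rham factor of $X$ and hence a virtually central infinite cyclic subgroup of $G$. This is ruled out by Lemma~\ref{lem: no Z factor}.

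The second step is to invoke the superrigidity and splitting results of Monod and Caprace--Monod. Because $G$ is an irreducible cocompact lattice in the product $H := \Aut(T_1)\times\dots\times\Aut(T_n)$ of locally compact groups and acts minimally on the proper, finite-dimensional CAT(0) space $X$ without a fixed point at infinity, the action extends continuously to an $H$-action on $X$. The product decomposition theorem of Caprace--Monod then yields a canonical $H$-equivariant metric splitting $X = X_1\times\dots\times X_n$ in which each $\Aut(T_i)$ acts on the corresponding factor $X_i$. Since $G\subseteq H$, this splitting is preserved by $G$.

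It remains to check that each factor $X_i$ is unbounded. The subgroup $G_i$ of \eqref{eq:G_i} acts geometrically on $T_i$ by Lemma~\ref{lem:action of Gi}, and it preserves the splitting factor by factor. If some $X_i$ were bounded, composing the action of $G_i$ on $X$ with the projection to the remaining factors would give a geometric action of $G_i$ on a CAT(0) space quasi-isometric to a proper subproduct, contradicting that the bushy tree $T_i$, on which $G_i$ also acts geometrically, has nonempty boundary in the $i$-th direction. I expect the main obstacle to be keeping the bookkeeping of the splitting tight: one must argue that the canonical decomposition produced by Caprace--Monod has exactly $n$ factors matching the $n$ trees, rather than a coarser grouping of them; this uses irreducibility of $G$ as a lattice in $H$, which prevents any finite-index subgroup of $G$ from being contained in a proper subproduct of $H$.
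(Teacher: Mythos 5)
Your overall architecture matches the paper's proof: finite-dimensionality of $\bdry X$ via Theorem~\ref{dimension qi invariant}, no fixed point at infinity via Lemma~\ref{lem: no Z factor}, extension of the $G$-action to a continuous $\Aut(T_1)\times\dots\times\Aut(T_n)$-action by Caprace--Monod rank rigidity, and then Monod's splitting theorem to get the $H$-equivariant product decomposition $X_1\times\dots\times X_n$. (One small remark on the fixed-point step: the paper does not argue that a fixed point at infinity forces a Euclidean factor; it applies \cite[Lem 26]{PapasogluSwenson09} directly, which says that virtually stabilizing a finite subset of $\bdry X$ is equivalent to $G$ virtually having a $\mathbb Z$ direct factor. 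Your ``Adams--Ballmann-type'' detour reaches the same conclusion from the same input, so this is only a matter of citation.)

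The genuine gap is in the unboundedness of the factors. Your argument assumes that if $X_i$ is bounded, then ``composing the action of $G_i$ on $X$ with the projection to the remaining factors would give a geometric action of $G_i$'' on the subproduct. But $G_i$, defined in \eqref{eq:G_i} as a point stabilizer for the projected action, is of infinite index in $G$ when $G$ is irreducible, so it does not act cocompactly on $X$ or on any subproduct; and even granting such an action, you never extract an actual contradiction from ``$T_i$ has nonempty boundary in the $i$-th direction.'' Your worry about the splitting having a ``coarser grouping'' of factors is also misplaced: Monod's theorem produces exactly $n$ factors indexed by the $\Aut(T_i)$; the real danger is that some factor is bounded, which is precisely what must be excluded. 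The paper's fix is pointwise rather than subgroup-wise: take an infinite-order $g_i\in G_i$ (Lemma~\ref{lem:action of Gi}). Since $G$ acts geometrically on $X$, $g_i$ is a semisimple isometry of $X$ of infinite order, hence hyperbolic, and by \cite[Prop 6.9]{BridsonHaefliger} it is semisimple on each factor $X_j$. For $j\neq i$ the image of $g_i$ in $\Aut(T_j)$ fixes a vertex, hence lies in a compact subgroup and acts elliptically on $X_j$; so $g_i$ must be hyperbolic on $X_i$, whose axis makes $X_i$ unbounded. You need some version of this element-level argument to close the step.
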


\begin{proof}
By Theorem~\ref{dimension qi invariant}, $X$ has finite-dimensional boundary. 
By Lemma~\ref{lem: no Z factor} $G$ does not virtually have $\mathbb Z$ as a direct factor. In particular, $X$ does not have a Euclidean factor by \cite[Thm 2(v)]{CapraceMonod19}.
By \cite[Lem 26]{PapasogluSwenson09} $G$ acts without a fixed point at infinity.
Thus, the rank-rigidity theorem \cite[Thm 8.4]{CapraceMonod09} implies that the $G$-action of $X$ extends to a continuous $\Aut(T_1) \times \dots \times\Aut(T_n)$-action by isometries.
By \cite[Thm 9]{Monod06} $X$ splits $\Aut(T_1) \times \dots \times\Aut(T_n)$-equivalently isometrically as a direct product $X_1\times\dots \times X_n$ of $\Aut(T_i)$-spaces $X_i$.

It remains to show that each factor $X_i$ is unbounded.
Let $g_i$ be an infinite order element of $G_i$, provided by Lemma~\ref{lem:action of Gi}. Since the action of $G$ on $X$ is geometric, each element of $G$ acts on $X$ as a semi-simple isometry, and in particular, $g_i$ acts as a hyperbolic isometry. By \cite[Prop 6.9]{BridsonHaefliger} $g_i$ acts as a semi-simple isometry of each factor $X_1, \dots, X_n$. As an element of $G_i$, $g_i$ acts elliptically on each of the factors $X_1, \dots, X_{i-1}, X_{i+1}, \dots, X_n$, so $g_i$ must act as a hyperbolic isometry on $X_i$. Thus $X_i$ contains an axis of $g_i$, and in particular, is unbounded.
\end{proof}

 \subsection{Analyzing the join factors}\label{sec:from splitting to splitting of cantor sets}

Let $G$ be a lattice in a product of $n$ bushy trees.
In this subsection, we complete the proof of the main theorem by showing the following proposition.

\begin{prop}\label{prop:factors are cantor sets}
	Let $G$ be a lattice in a product of trees .
	Suppose $G$ acts geometrically on a CAT(0) space $X$ where $\bdry X = \bdry X_1*\dots* \bdry X_n$ for unbounded, closed, convex subspaces $X_1$,$\dots$, $X_n$ in $X$.
	Then each $\bdry X_1$, $\dots$,$\bdry X_n$ is homeomorphic to the Cantor set and each subspace $X_i$ is quasi-isometric to a finite valence bushy tree.
\end{prop}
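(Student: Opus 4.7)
The plan is to show that for each $i$, the subgroup $G_i$ of Lemma~\ref{lem:action of Gi} acts geometrically on a convex copy of $X_i$ sitting inside $X$. Švarc--Milnor will then give $X_i$ quasi-isometric to the bounded-valence bushy tree $T_i$, which proves the second claim directly and, via QI-invariance of the Gromov boundary for hyperbolic spaces, identifies $\bdry X_i$ as a Cantor set.

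To set the stage I would first pin down $\dim \bdry X_i$. Since $G$ acts geometrically on both $X$ and on $T := T_1 \times \dots \times T_n$, these spaces are quasi-isometric, so Theorem~\ref{dimension qi invariant} gives $\dim \bdry X = \dim \bdry T = n-1$. The dimension formula for joins,
\[ \dim(\bdry X_1 * \dots * \bdry X_n) \;=\; (n-1) + \sum_{i=1}^n \dim \bdry X_i, \]
then forces $\dim \bdry X_i = 0$ for each $i$. Since $X_i$ is closed convex in the proper space $X$, it is itself proper, so $\bdry X_i$ is compact and metrizable, and it is nonempty because $X_i$ is unbounded.

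Next I would promote the $G_i$-action on $T_i$ to a geometric action on a convex copy of $X_i$. The rank-rigidity and splitting machinery used in the proof of Theorem~\ref{thm:join} (Caprace--Monod together with Monod) makes the decomposition $X = X_1 \times \dots \times X_n$ equivariant under a continuous extended $\Aut(T_1) \times \dots \times \Aut(T_n)$-action, with $\Aut(T_j)$ acting on $X_j$ and trivially on the other factors. Because $G_i$ stabilizes $pr_i(v)$, its image in $\Aut(T_j)$ for each $j \neq i$ lies inside a vertex stabilizer, which is compact by the bounded-valence hypothesis. Hence the image of $G_i$ in $\Isom(X_j)$ is precompact, so $G_i$ has bounded orbits in $X_j$ and, by the Bruhat--Tits fixed-point theorem, admits a common fixed point $c_j \in X_j$. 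Then $G_i$ preserves the closed convex isometric copy $Y_i := X_i \times \{(c_j)_{j \neq i}\}$ of $X_i$. Properness of the induced action $G_i \curvearrowright Y_i$ follows from properness of $G \curvearrowright X$, since the kernel fixes every point of $Y_i \subset X$ and therefore embeds in a finite point-stabilizer. For cocompactness I would transfer the cocompact action $G_i \curvearrowright T_i \times \{pr_i(v)\}$ from Lemma~\ref{lem:action of Gi} across the $G$-equivariant quasi-isometry $\phi : T \to X$: the coarsely $G_i$-invariant image $\phi(T_i \times \{pr_i(v)\})$ has bounded projection to each $X_j$, $j \neq i$, by the orbit-boundedness above, so it lies within finite Hausdorff distance of $Y_i$ and the cocompact $G_i$-structure transports to $Y_i$.

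Once the geometric action $G_i \curvearrowright Y_i \cong X_i$ is established, two applications of Švarc--Milnor give $X_i$ quasi-isometric to $G_i$ and $G_i$ quasi-isometric to $T_i$; composing yields the second conclusion of the proposition. Since $T_i$ is Gromov-hyperbolic with Cantor-set Gromov boundary, hyperbolicity transfers to $X_i$ across the quasi-isometry, and the Gromov boundary (a QI-invariant of hyperbolic spaces, coinciding with the visual boundary for CAT(0) hyperbolic spaces) identifies $\bdry X_i$ with $\bdry T_i$, a Cantor set. I expect the cocompactness step above to be the main technical obstacle: one must couple the equivariant quasi-isometry $\phi$ with the two product splittings and rule out the possibility that $\phi(T_i \times \{pr_i(v)\})$ drifts off in the other $X_j$-directions away from $Y_i$. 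The precompactness of $G_i$ in $\Aut(T_j)$ and the resulting Bruhat--Tits fixed points are precisely what make this transfer possible.
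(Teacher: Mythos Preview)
Your strategy is genuinely different from the paper's. The paper explicitly flags that for an irreducible lattice one does \emph{not} have a geometric action on the factors $X_i$, and it works around this: it shows each $\bdry X_i$ is $0$-dimensional, deduces $X_i$ is a visibility space and hence hyperbolic (using only cocompactness of the projected $G$-action), identifies $\bdry X_i$ with $\mathrm{Ends}(X_i)$, and then runs a Hopf-type argument (Proposition~\ref{prop:Hopf}) together with a nullity property inherited from $X$ to conclude the ends space is perfect; the Cantor characterization finishes. Only afterwards does it invoke Manning's bottleneck criterion to get $X_i$ quasi-isometric to a tree. Your route---manufacture a geometric $G_i$-action on $X_i$ via the extended $\Aut(T_i)$-action and then apply \v{S}varc--Milnor twice---is more direct when it works and gives both conclusions simultaneously, but it leans much harder on the Caprace--Monod/Monod machinery.

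There is, however, a real gap in your cocompactness step. Showing that $\phi(T_i\times\{pr_i(v)\})$ has bounded projection to each $X_j$, $j\neq i$, only yields the inclusion $\phi(T_i\times\{pr_i(v)\})\subset N_R(Y_i)$; it does \emph{not} give the reverse containment $Y_i\subset N_{R'}(\phi(T_i\times\{pr_i(v)\}))$, so you have not established finite Hausdorff distance, and cocompactness does not yet transfer. The obvious attempt---apply the quasi-inverse $\psi$ to $Y_i$ and argue symmetrically---is circular, since bounding the $T_j$-projections of $\psi(Y_i)$ would already require $G_i$-cocompactness on $Y_i$. The gap can be closed, but by a different argument: the projected $G$-action on $X_i$ is cocompact and factors through $\Aut(T_i)$; since $G_i$ already acts cocompactly on $T_i$, every $\bar g\in\mathrm{im}(G\to\Aut(T_i))$ can be translated by some $\bar h$ with $h\in G_i$ into a fixed compact subset $C\subset\Aut(T_i)$, and continuity of the $\Aut(T_i)$-action on $X_i$ then shows $G_i\cdot(C\cdot K_i)=X_i$ for a compact $K_i$. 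You should replace your Hausdorff-distance sentence with this (or an equivalent) argument.
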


The subtlety involved here is that when the group $G$ is \emph{not} (virtually) a product of free groups, we do not have geometric group actions on $X_i$.  
Indeed, if $G$ is an irreducible lattice, then the projected action of $G$ on $X_i$ is cocompact but not proper.

\begin{lem}\label{lem:boundary=space of ends} 
Suppose $G$ acts geometrically on a $\cat(0)$ space $X$ so that $\bdry X=\bdry X_1*\dots*\bdry X_n$ for closed convex subspaces $X_1,\dots, X_n$ in $X$. If the topological dimension of $\partial X_1,\dots,\partial X_n$ are zero, then the boundary $\partial_{\infty} X_i$ is homeomorphic to $\textrm{Ends}(X_i)$, the ends space of $X_i$.
\end{lem}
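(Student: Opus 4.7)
The natural ``endpoint'' map $\pi \colon \partial_\infty X_i \to \textrm{Ends}(X_i)$, sending the class of a geodesic ray to the end it represents, is a continuous surjection between compact Hausdorff spaces, since $X_i$ is a closed convex subspace of the proper CAT(0) space $X$ and hence itself proper CAT(0). A continuous bijection from a compact space to a Hausdorff space is automatically a homeomorphism, so it suffices to show $\pi$ is injective. My plan is to prove that every fiber $\pi^{-1}(e)$ is a connected subset of $\partial_\infty X_i$; combined with the zero-dimensionality hypothesis (a compact metrizable 0-dimensional space is totally disconnected), this forces each fiber to be a singleton, giving injectivity.

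To prove connectedness of fibers, let $\xi_1, \xi_2 \in \pi^{-1}(e)$ be represented by geodesic rays $r_1, r_2$ from a basepoint $x_0 \in X_i$. For each large $R$, the tails $r_1(R), r_2(R)$ lie in a common connected component $U_R$ of $X_i \setminus \bar B(x_0, R)$, namely the one corresponding to $e$. I would choose a continuous path $\gamma_R \colon [0,1] \to U_R$ from $r_1(R)$ to $r_2(R)$ and, for each $s \in [0,1]$, form the geodesic segment $g_{s,R}$ from $x_0$ to $\gamma_R(s)$. Since $d(x_0, \gamma_R(s)) > R \to \infty$, these segments have unbounded length, so by properness of $X_i$ and Arzel\`a--Ascoli one extracts (via a diagonal subsequence in $R$) limits $\rho_s$ that are geodesic rays from $x_0$ with $\rho_0 = r_1$ and $\rho_1 = r_2$. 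Each $\rho_s$ represents the end $e$ by a connectedness argument: since the distance to $x_0$ along a CAT(0) geodesic starting at $x_0$ grows monotonically, the portion of $g_{s, R'}$ past parameter $R$ (for $R' > R$) stays in $U_R$, and since $U_R$ is clopen in the open set $X_i \setminus \bar B(x_0, R)$, the limit $\rho_s(t)$ also lies in $U_R$ for all $t > R$.

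The main obstacle will be establishing continuity of $s \mapsto \rho_s(\infty) \in \partial_\infty X_i$, so that the construction yields a genuine path in $\pi^{-1}(e)$ joining $\xi_1$ to $\xi_2$. By the definition of the cone topology, this reduces to continuity of $s \mapsto \rho_s(R)$ for each fixed $R$. I would address it by selecting the auxiliary paths $\gamma_R$ continuously in both $R$ and $s$, invoking the fact that CAT(0) geodesics depend continuously on their endpoints, and showing that $g_{s, R'}(R) \to \rho_s(R)$ uniformly in $s \in [0,1]$ as $R' \to \infty$. Once the continuous path in $\pi^{-1}(e)$ is produced, the zero-dimensionality of $\partial_\infty X_i$ forces $\xi_1 = \xi_2$, and $\pi$ is a continuous bijection between compact Hausdorff spaces, hence the desired homeomorphism.
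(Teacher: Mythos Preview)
Your overall strategy---show that each fiber of the natural map $\pi\colon\partial_\infty X_i\to\textrm{Ends}(X_i)$ is connected, invoke zero--dimensionality to conclude fibers are singletons, and then use that a continuous bijection from a compact space to a Hausdorff space is a homeomorphism---is sound and is precisely the skeleton of the paper's argument. The divergence is in how connectedness of fibers is obtained.

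The paper does not attempt to build a path in $\partial_\infty X_i$. Instead it first proves that $X_i$ is Gromov hyperbolic: since $\dim\partial_\infty X_i=0$, any two distinct boundary points are joined by a bi-infinite geodesic (otherwise \cite[Prop.~II.9.21]{BridsonHaefliger} would produce an arc in the Tits boundary), so $X_i$ is a visibility space; cocompactness of the projected $G$--action then upgrades this to uniform visibility and hence hyperbolicity. For proper hyperbolic spaces it is a standard fact that the fibers of $\pi$ are exactly the connected components of the boundary, and the rest follows.

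Your path construction, by contrast, has a genuine gap at exactly the point you flag. Extracting the rays $\rho_s$ by Arzel\`a--Ascoli requires passing to a subsequence of the $R$'s, and the subsequence that works for one value of $s$ need not work for another; there is no single diagonal subsequence producing a \emph{continuous} family $s\mapsto\rho_s$. Your proposed fix---choose the $\gamma_R$ continuously in $(R,s)$ and then argue $g_{s,R'}(R)\to\rho_s(R)$ uniformly in $s$---does not rescue this: even with a jointly continuous family $\gamma_R$, for fixed $s$ the endpoints $\gamma_R(s)$ may oscillate among several limit points in $\partial_\infty X_i$ as $R\to\infty$, so the full sequence $g_{s,R'}(R)$ need not converge at all. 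What you actually need is only \emph{connectedness} of $\pi^{-1}(e)$, not path-connectedness, and this can be obtained cleanly by writing $\pi^{-1}(e)=\bigcap_R\overline{U_R}$ (closure in $\overline{X_i}=X_i\cup\partial_\infty X_i$), a nested intersection of compact connected sets; but that is a different argument from the one you propose.
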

\begin{proof} Let us first show that each $X_i$ is hyperbolic.  Note that since $X$ is proper, so are $X_i, \dots, X_n$.

First, we show that the subspace $X_i$ for $i=\{1,\dots,n\}$ is a \emph{visibility space}, i.e. given any two distinct points $\xi, \eta$ in $\bdry X_i$, there is a geodesic line in $X_i$ between them.
Suppose there is no geodesic line in $X_i$ between $\xi$ and $\eta$. Then by \cite[Prop. II.9.21(2)]{BridsonHaefliger}, there is a geodesic segment in $\partial_T X_i$ joining them. This segment is an arc in $\partial_T X_i$ which would map to an arc in $\bdry X_i$ via the identity map on $\partial X$.  This contradicts the fact that $\bdry X_i$ is 0-dimensional. 

Moreover, the action of $G$ on $X_i$ is cocompact, since so is the action of $G$ on $X$. Hence, $X_i$ 
is uniformly visible by \cite[Prop II.9.32]{BridsonHaefliger}. Finally, $X_i$ is hyperbolic by \cite[Prop III.1.4]{BridsonHaefliger}.

Since $X_i$ is a proper hyperbolic space, the natural map $\bdry X_i\to \textrm{Ends}(X_i)$ is continuous and the fibers of that map are the connected components of $\bdry X_i$ \cite[Exer III.H.3.9]{BridsonHaefliger}. 
Since $\bdry X_i$ is $0$-dimensional, the connected components are single points. 
Thus the map $\bdry X_i\to \textrm{Ends}(X_i)$ is a continuous bijection. 
Every continuous bijection from a compact space to a Hausdorff space is a homeomorphism.
\end{proof}

In \cite[Def 1.1]{Bestvina96}, Bestvina outlined a set of axioms that a group boundary should have in order to be useful for relating homological invariants of the boundary to cohomological invariants of the group. All of the axioms hold true for a hyperbolic group $G$ acting on $G\cup\bdry G$ and for a $\cat(0)$ group $G$ acting on $X\cup\bdry X$ where $G$ admits a geometric action on the $\cat(0)$ space $X$. One of the axioms requires the collection of translates of any compact set to form a null set in $X\cup\bdry X$, i.e. for any open cover $\mathcal U$ of $X\cup\bdry X$ and any compact set $K$ in $X$, all but finitely many $G$-translates of $K$ are contained in an element of $\mathcal U$. 

The next lemma shows that each $X_i\cup\bdry X_i$ inherits this nullity condition on compact sets from $X\cup\bdry X$ even though we have no geometric group action on $X_i$.  

\begin{lem}\label{lem:Zset property}
Suppose $G$ acts geometrically on a $\cat(0)$ space $X$ with $\bdry X=\bdry X_1*\dots *\bdry X_n$ for closed convex subspaces $X_1,\dots, X_n$ in $X$.
 For every compact set $K\subseteq X_i$ and every open neighborhood $U$ of an end of $X_i$, there exists $g\in G$ such that $gK\subseteq U$.
\end{lem}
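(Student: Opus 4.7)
The plan is to leverage the cocompactness of the projected action of $G$ on $X_i$, which is inherited from the splitting $X = X_1 \times \dots \times X_n$ given by Theorem~\ref{thm:join}. Since $G$ preserves this decomposition, each $g \in G$ acts as a tuple of isometries $(g_1, \dots, g_n)$, giving a homomorphism $G \to \Isom(X_i)$; projecting a compact fundamental domain for the action of $G$ on $X$ yields a compact $F \subseteq X_i$ such that $G \cdot F = X_i$ under this projected action. I will write $g \cdot x$ for $g \in G$, $x \in X_i$ to denote this projected action.

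I will first reduce to a concrete neighborhood. Any open neighborhood of $e$ in the end compactification $\widehat{X_i}$ contains a basic set of the form $U_0 = C \cup \{e' : e' < C\}$, where $C$ is a connected component of $X_i \setminus K_0$ for some compact $K_0 \subseteq X_i$ with $e < C$. It therefore suffices to find $g \in G$ with $g K \subseteq U_0$. I choose a geodesic ray $r : [0, \infty) \to X_i$ representing $e$; by the definition of $e < C$, $r(t) \in C$ for all $t$ sufficiently large, and since $r$ is a proper map and $K_0$ is compact, $d(r(t), K_0) \to \infty$ as $t \to \infty$.

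For each such $t$, cocompactness produces $h_t \in G$ with $h_t \cdot r(t) \in F$; I set $g_t := h_t^{-1}$. Then for every $p \in K$ one has $d(g_t \cdot p, r(t)) = d(p, h_t \cdot r(t)) \leq \mathrm{diam}(K \cup F) =: D$, so $g_t K \subseteq \overline{B(r(t), D)}$. The key geometric observation is that distinct connected components of $X_i \setminus K_0$ are separated by $K_0$: any path from $r(t) \in C$ to a point in another component must cross $K_0$, so $d(r(t), X_i \setminus C) = d(r(t), K_0)$. Choosing $t$ large enough that $d(r(t), K_0) > D$ therefore forces $\overline{B(r(t), D)} \subseteq C$, hence $g_t K \subseteq C \subseteq U$.

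The main subtlety, rather than a genuine obstacle, is the absence of a proper action of $G$ on $X_i$: this is precisely why the lemma is non-trivial, since a proper cocompact action would immediately yield the conclusion via the standard nullity axiom for $Z$-set boundaries. The argument above sidesteps the lack of properness by using cocompactness alone to push $K$ near any chosen deep point $r(t)$, relying on the separating role of $K_0$ in the end compactification of $X_i$ to confine the translate to $U$.
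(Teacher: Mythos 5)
Your proof is correct, but it takes a genuinely different route from the paper's. The paper disposes of the lemma in a few lines by working in the ambient space: it forms the product compact set $K\times K'$ and the product neighborhood $U\times X_2\times\dots\times X_n$, and then invokes the standard nullity property of the geometric action of $G$ on $X$ (citing Bestvina) to translate $K\times K'$ into that neighborhood, projecting the conclusion down to $X_1$. You instead work entirely inside the factor $X_i$: you use only that the projected action of $G$ on $X_i$ is cocompact by isometries, push $K$ to within a uniform distance $D$ of a point $r(t)$ deep along a ray to the end, and use the elementary fact that a component $C$ of $X_i\setminus K_0$ satisfies $d(r(t),X_i\setminus C)=d(r(t),K_0)$ to trap the translate in $C$ once $d(r(t),K_0)>D$. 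Each approach buys something: the paper's is shorter but leans on an external result about $X\cup\bdry X$ and implicitly on Lemma~\ref{lem:boundary=space of ends} (hence on the $0$-dimensionality of $\bdry X_1$) to pass between neighborhoods of ends and neighborhoods of boundary points; yours is self-contained, never mentions $\bdry X_i$ at all (matching the statement, which is about ends), and needs no dimension hypothesis. Two small points: your use of a \emph{geodesic} ray is an unnecessary luxury --- any proper ray representing $e$ gives $d(r(t),K_0)\to\infty$ by properness of $X_i$ --- and, like the paper, you import the product decomposition and factor-preservation from Theorem~\ref{thm:join} rather than from the lemma's stated hypotheses, which is how the lemma is actually applied, so this is consistent with the paper's own usage.
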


\begin{proof} 
We show that the lemma holds for $X_1$. The argument for $X_i$, $i \neq 1$ is identical.
Let $K\subseteq X_1$ and $K'\subseteq X_2\times\dots\times X_n$ be compact sets. Then $K\times K'$ is a compact set in $X$. By Lemma~\ref{lem:boundary=space of ends}, $\bdry X_1$ is homeomorphic to $\textrm{Ends}(X_1)$. Let $\xi\in\bdry X_1$, and let $U$ be an open neighborhood of $\xi$. Then $U\times X_2\times\dots\times X_n$ is an open neighborhood of $\xi$, viewed as a point in $\bdry X= \bdry X_1* \bdry X_2*\dots*\bdry X_n$. Since the action of $G$ on $X$ is geometric, there exists $g\in G$ such that $g(K\times K')\subseteq U\times X_2\times \dots, \times X_n$ (see e.g.\ \cite[p.124]{Bestvina96}). It follows that in the action of $G$ on $X_1$, we have $gK\subseteq U$. \end{proof}

The following is not stated explicitly, but it is proved in \cite{Hopf44}. It is also proved in a similar form in \cite{KronNotes}. We include the proof for completeness. We restrict our attention to the case of geodesic metric spaces, but the proposition holds in more general setting.

\begin{prop}[\cite{Hopf44}]\label{prop:Hopf}
Let $Y$ be a geodesic metric space with at least three ends.
Let $G$ be a group acting on $Y$ cocompactly so that the following holds: 
for every compact set $K\subseteq Y$, and every open neighborhood $U$ of an end of $Y$, 
there exists $g\in G$ such that $gK\subseteq U$.
Then the space of ends $\textrm{Ends}(Y)$ is perfect.
\end{prop}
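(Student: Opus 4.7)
The plan is to show no end of $Y$ is isolated. Fix $\xi \in \textrm{Ends}(Y)$ and a basic open neighborhood of $\xi$ in $\widehat Y$ of the form $U = V \cup \{e < V\}$, where $V$ is a connected component of $Y\setminus K$ for some compact $K$; I need to produce an end $\tau \neq \xi$ with $\tau < V$. After enlarging (and using that $Y$ is proper and geodesic, so closed balls are connected compact) I may replace $K$ by a connected compact set $\tilde K$, and $V$ by the component of $Y\setminus \tilde K$ containing a tail of a ray representing $\xi$; so without loss of generality $K$ itself is connected. Using the assumption that $Y$ has at least three ends, I can also fix a \emph{connected} compact $K_0 \subseteq Y$ such that $Y\setminus K_0$ has at least three unbounded components $C_1, C_2, C_3$.

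The main step is to apply the hypothesis of the proposition to $K_0$ and the open set $V$, producing $g \in G$ with $gK_0 \subseteq V$. The three sets $gC_1, gC_2, gC_3$ are then disjoint unbounded components of $Y \setminus gK_0$. Because $V \cap K = \emptyset$ and $gK_0 \subseteq V$, the compact connected set $K$ is contained in a single component of $Y\setminus gK_0$, so at least two of the $gC_j$, say $gC_1$ and $gC_2$, are disjoint from $K$. I then claim that each such $gC_j$ actually lies inside $V$. Indeed, $gC_j$ is a nonempty open subset of the connected space $Y$ whose topological frontier is nonempty and contained in $gK_0 \subseteq V$; hence $gC_j$ meets $V$. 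Being connected and disjoint from $K$, $gC_j$ lies in a single component of $Y\setminus K$, and since it meets $V$ that component must be $V$. Each unbounded $gC_j$ contains at least one end $\tau_j$ of $Y$, and because $gC_1 \cap gC_2 = \emptyset$ we get $\tau_1 \neq \tau_2$, so at least one of them differs from $\xi$. This $\tau_j$ lies in $U$, which completes the argument; since $U$ was arbitrary, $\xi$ is a limit of other ends, and $\textrm{Ends}(Y)$ is perfect.

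The delicate point I expect to be the main obstacle is the "furthermore" in step four: showing that $gC_j$ sits entirely inside the chosen component $V$, not merely that it avoids $K$. A priori a component of $Y\setminus gK_0$ could straddle $K$ and spill into some other component of $Y\setminus K$ distinct from $V$. The fix is to insist that both $K$ and $K_0$ are connected: connectedness of $K$ forces all but one of the $gC_j$ to miss $K$ entirely, and once $gC_j \cap K = \emptyset$ the connectedness of $gC_j$ confines it to a single component of $Y\setminus K$, which must be $V$ because the frontier of $gC_j$ lies in $gK_0 \subseteq V$. Every other ingredient — cocompactness, having three ends, the translation property — is used only to set up this configuration.
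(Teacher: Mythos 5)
Your argument is correct, and its overall strategy coincides with the paper's: use the translation hypothesis to push a compact set that separates three ends into the given neighborhood of $\xi$, then conclude that at least two of the translated complementary components, and hence two distinct ends, land inside that neighborhood. Where you genuinely diverge is in the normalization that makes this work. The paper shrinks $U$ so that the \emph{complement} $Y-U$ is connected (a somewhat fiddly construction covering $\widehat Y-U$ by finitely many connected open sets and joining them by paths), and then argues by contradiction that for each component $Y_i$ either $gY_i\subseteq U$ or $g(Y-Y_i)\subseteq U$, because a path in the connected set $Y-U$ joining witnesses to the contrary would have to cross $gK\subseteq U$; the punchline is that $Y-U$ would lie in two disjoint sets. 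You instead make the \emph{compact sets} $K$ and $K_0$ connected and run a frontier argument: a component of $Y\setminus gK_0$ missing the connected set $K$ sits in a single component of $Y\setminus K$, which must be $V$ because its frontier lies in $gK_0\subseteq V$. Your version is direct rather than by contradiction and avoids the connected-complement construction, at the cost of invoking properness of $Y$ to build connected compact enlargements; properness is not among the stated hypotheses, but the paper's proof also implicitly uses local compactness (compactness of $\widehat Y$ and of the closures $\widehat V_i$), and in the only application $Y=X_i$ is proper, so nothing is really lost. One small polish: rather than asserting that each \emph{unbounded} $gC_j$ contains an end (which is not automatic for an arbitrary unbounded set), note that each $C_j$ was chosen to contain the tail of a proper ray representing one of the three ends, so $gC_j$ contains the corresponding translated end; disjointness of $gC_1$ and $gC_2$ then gives two distinct ends exactly as you say.
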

\begin{proof}
Suppose that there exists an end $e\in \textrm{Ends}(Y)$ that is isolated, i.e.\ there exists a neighborhood $U$ of $e$ that does not contain any other ends. 

First we show that without loss of generality, we can assume that $Y-U$ is connected. 
Indeed, if $Y-U$ is not connected, we construct a neighborhood $U'\subseteq U$ of $e$ such that $Y-U'$ is connected. 
Since $\widehat Y$ is compact, so is $\widehat Y-U$. 
Let $V_1, \dots, V_n$ be a finite collection of open sets covering $\widehat Y-U$ such that each $V_i$ is connected and does not contain $e$, and its closure $\widehat V_i$ in $\widehat Y$ is compact. 
The union $\bigcup_{i=1^n}V_i$ has finitely many components, and since $\textrm{Ends}(Y)$ is nowhere dense in $\widehat Y$, each $V_i$ contains points of $Y$. Each two points in $Y$ can be joined by a path in $Y$.
In particular, there exists a closed connected set $Q$ which is the union of $\bigcup_{i=1^n}V_i$ and a finite number of paths in $Y$. Note that $Q$ does not contain $e$. The set $U' = \widehat Y-Q$ is an open neighborhood of $e$ and since $Y-U\subseteq \bigcup V_i\subseteq Q$, we have $U'\subseteq U$. 
Thus, $U'$ is the neighborhood we were looking for.

By assumption, there are at least three distinct ends $e_1, e_2, e_3$ in $Y$. Let $K$ be a compact set in $Y$ such that each of the ends $e_1, e_2, e_3$ lies in a different connected component $Y_1, Y_2, Y_3$ of $Y-K$. By assumption, there exists an element $g\in G$ such that $gK\subseteq U$. We claim that for each $i=1,2,3$, either $gY_i\subseteq U$, or $g(Y-Y_i)\subseteq U$. Indeed, otherwise there exists point $p\in gY_i-U$ and $q\in g(Y-Y_i)-U$. Since $Y-U$ is connected, there exists a path $\gamma$ in $Y-U$ joining $p$ and $q$.
 Note that $p,q$ lie in distinct connected component of $Y-gK$, so $\gamma$ has to pass through $gK$. This is a contradiction, since $gK\subseteq U$.

Since $U$ contains only one end, we have $g(Y-Y_i)\subseteq U$ for at least two $i$'s among $1,2,3$, say $1$ and $2$.
It follows that $Y-U\subseteq gY_i$ for $i=1,2$. The subsets $Y_1$ and $Y_2$ are disjoint, and so are $gY_1$ and $gY_2$. This is a contradiction and therefore the space of $\textrm{Ends}(Y)$ has no isolated points.
\end{proof}

\begin{lem}\label{lem:boundary Cantor set}
Suppose $G$ acts geometrically on a $\cat(0)$ space $X$ so that $\bdry X=\bdry X_1*\dots*\bdry X_n$ for closed convex subspaces $X_1,\dots, X_n$ in $X$. Suppose that the topological dimensions of $\partial X_1,\dots,\partial X_n$ are zero. 
If $|\partial X_i|\geq 3$, then $\partial X_i$ is homeomorphic to the Cantor set $\mathcal C$. 
\end{lem}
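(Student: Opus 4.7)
The plan is to verify the four defining properties of the Cantor set for $\partial X_i$: compact, metrizable, totally disconnected, and perfect, and then invoke Brouwer's classical characterization.

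First I would establish the three "easy" properties. Since $X$ is a proper CAT(0) space with a geometric $G$-action, $\bdry X$ is compact and metrizable, and each $\bdry X_i$ is closed in $\bdry X$ (being a join factor of the boundary corresponding to a closed convex subspace), so $\bdry X_i$ is itself compact and metrizable. Total disconnectedness is immediate from the hypothesis that $\bdry X_i$ is $0$-dimensional, since for compact Hausdorff spaces $0$-dimensionality coincides with total disconnectedness.

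The heart of the proof is showing that $\bdry X_i$ is perfect. Here I would use the identification $\bdry X_i\cong \textrm{Ends}(X_i)$ provided by Lemma~\ref{lem:boundary=space of ends}, which applies because $\bdry X_i$ is $0$-dimensional. I then want to apply Proposition~\ref{prop:Hopf} to $Y=X_i$. The needed hypotheses are all in place: $X_i$ is a closed convex subspace of the geodesic space $X$, hence geodesic; the action of $G$ on $X_i$ is cocompact (as noted inside the proof of Lemma~\ref{lem:boundary=space of ends}, since the geometric $G$-action on the product $X$ descends to a cocompact projected action on each factor); the nullity condition on compact sets is exactly the content of Lemma~\ref{lem:Zset property}; and $X_i$ has at least three ends because $|\bdry X_i|\geq 3$ and $\bdry X_i\cong\textrm{Ends}(X_i)$. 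Proposition~\ref{prop:Hopf} then yields that $\textrm{Ends}(X_i)$, and therefore $\bdry X_i$, is perfect.

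With all four properties verified, Brouwer's characterization of the Cantor set (every nonempty compact, metrizable, totally disconnected, perfect space is homeomorphic to $\mathcal{C}$) gives the conclusion. The main obstacle is really just the perfectness step, but the heavy lifting there has already been done in Proposition~\ref{prop:Hopf} and Lemma~\ref{lem:Zset property}; the delicate point that needed attention was that even though $G$ does not act properly on $X_i$ when the lattice is irreducible, the nullity/cocompactness package survives the projection, which is precisely what Lemma~\ref{lem:Zset property} was designed to provide.
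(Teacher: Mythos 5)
Your proposal is correct and follows the paper's own proof essentially verbatim: compactness and metrizability from properness, total disconnectedness from $0$-dimensionality, perfectness via Lemma~\ref{lem:Zset property}, Proposition~\ref{prop:Hopf}, and the identification $\bdry X_i\cong\textrm{Ends}(X_i)$ from Lemma~\ref{lem:boundary=space of ends}, and finally Brouwer's characterization. No discrepancies to report.
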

\begin{proof}
Since $X_i$ is a proper CAT(0) space, the boundary $\partial X_i$ is compact and metrizable \cite{BridsonHaefliger}. 
As $\dim \bdry X_i =0$, the boundary $\bdry X_i$ is totally disconnected.
The action of $G$ on $X_i$ is cocompact, since so is the action of $G$ on $X$. By Lemma~\ref{lem:Zset property}, for every compact set $K\subseteq X_i$ and every open neighborhood $U\subseteq X_i$ of an end of $X_i$, there exists $g\in G$ such that $gK\subseteq U$. By Proposition~\ref{prop:Hopf}, $\textrm{Ends}(X_i)$ is a perfect space. This implies that $\bdry X_i$ is a perfect space because $\textrm{Ends}(X_i) = \bdry X_i$ by Lemma~\ref{lem:boundary=space of ends}. 
By the characterization of the Cantor set, as a non-empty, perfect, totally disconnected, compact metrizable space, we conclude that $\partial X_i$ is the Cantor set. 
\end{proof}

\begin{proof}[Proof of Propostion~\ref{prop:factors are cantor sets}]
By Theorem~\ref{dimension qi invariant}, the topological dimension of $\bdry X$ coincides with the topological dimension of $\bdry T_1* \dots *\bdry T_n = n-1$, so each $\bdry X_i$ is $0$-dimensional.

By Theorem~\ref{thm:join} the action of $G$ on $X = X_1\times\dots\times X_n$ preserves the factors, and in particular, $G$ fixes each $\bdry X_i$ setwise. 
If for some $i=1,\dots, n$ the set $\bdry X_i$ was finite, then $G$ would have a $\mathbb Z$ factor, by Lemma~\ref{lem:[papasogluSwenson}. 
Since this is not the case for $G$ by Lemma~\ref{lem: no Z factor}, 
we deduce that $\bdry X_i$ is infinite for each $i=1,\dots, n$.
By Lemma~\ref{lem:boundary Cantor set} each $\bdry X_i$ is homeomorphic to a Cantor set.
Consequently, $\bdry X$ is homeomorphic to join of n copies of a Cantor set. 
We now show that $X_i$ is quasi-isometric to a tree, following \cite{Chao15}. By \cite[Lem 5.7]{Chao15}, $X_i$ has the bottleneck property, i.e. there is some $\Delta > 0$ so that for all $x, y \in X_i$ there is a
midpoint $m = m(x,y)$ with $d(x,m) = d(y,m) = \frac{1}{2} d(x,y) $ and the property that any path from $x$ to $y$ must pass within less than $\Delta$ of the point $m$. By \cite[Thm 4.6]{Mannning05}, this property is equivalent to $X_i$ being quasi-isometric to a simplicial tree $T'_i$. Since $X$ is a proper space, so is each $X_i$ and thus $T'_i$ is finite valence.  The boundary $\partial X_i$ is homeomorphic to a Cantor set and thus $T'_i$ is bushy.
\end{proof}

\bibliographystyle{alpha}
\bibliography{kasia}

\end{document}